\newtheorem{theorem}{Theorem}[section]
\newtheorem{proposition}[theorem]{Proposition}
\newtheorem{lemma}[theorem]{Lemma}
\newtheorem{definition}[theorem]{Definition}
\newtheorem{corollary}[theorem]{Corollary}
\theoremstyle{plain}
\numberwithin{equation}{theorem}
\theoremstyle{remark}
\newcommand{\Q}{{\mathbb Q}}
\newcommand{\R}{{\mathbb R}}
\newcommand{\Z}{{\mathbb Z}}
\renewcommand{\L}{{\mathcal L}}
\newcommand{\Pl}{{\mathbb P}}
\DeclareMathOperator{\Pic}{Pic}
\DeclareMathOperator{\Div}{div}
\DeclareMathOperator{\var}{Var}
\DeclareMathOperator{\Span}{Span}
\newcommand{\lra}{\longrightarrow}
\newcommand{\cO}{\mathcal{O}}
\title[Four-folds with involutions]{Geometry of Four-folds with three non-commuting involutions}
\author{Jorge Pineiro}
\address{Department of Mathematics and Computer Science.
Bronx Community College of CUNY.
2155 University Ave.
Bronx, NY 10453}
\email{jorge.pineiro@bcc.cuny.edu}
\subjclass[2010]{Primary:  37P55  ; Secondary: 14J35, 14J28, 32J15 }
\begin{document}
\begin{abstract}
In this paper we adapt some techniques developed for K3 surfaces, to study the geometry of a family of projective varieties in $\Pl_K^2 \times \Pl_K^2 \times \Pl_K^2$ defined as the intersection of a form of degree $(2,2,2)$ and a form of degree $(1,1,1)$.  Members of the family will be equipped with dominant rational self-maps and we will study the actions of those maps on divisors and compute the first dynamical degrees of the composition of any pair.
\end{abstract}

\maketitle

 \section{Introduction}
As a generalization of the work of Silverman and others \cite{Sil-K3}, \cite{Comput_on_K3} on families of K3 surfaces with infinite groups of automorphisms, we study dynamics on a family of varieties $X^{A,B}$ in $\Pl_K^2 \times \Pl_K^2 \times \Pl_K^2$ defined as the intersection of a form of degree $(2,2,2)$ and a form of degree $(1,1,1)$.  Individual members of the family $X^{A,B}$ come equipped with $(2:1)$-projections $p_1,p_2,p_3 : X^{A,B} \lra \Pl^2 \times \Pl^2$ that generate involutions $\sigma_1,\sigma_2,\sigma_3$ on $X^{A,B}$.  In this situation however the maps $\sigma_i$ for $i=1,2,3$ are not morphisms of the whole $X^{A,B}$, but only rational dominant maps. Still it is possible to induce maps $\sigma_i^* : \Pic(X) \lra \Pic(X)$ and $\tilde{\sigma}_i^* : NS(X)_{\Q} \lra NS(X)_{\Q}$, on divisors modulo linear and numerical equivalence.\\  The following degree associated to the dynamics was initially studied by Arnold in \cite{arnold}, and particularly for dominant rational maps by Silverman in \cite{Sil-Dynamical-degree}.
\begin{definition}
Let $X$ be an algebraic variety and $\varphi : X \dashrightarrow X$ a dominant rational map.  The first dynamical degree of $\varphi$ is $$\delta_{\varphi}=\limsup_{n \rightarrow \infty} \rho(\widetilde{\varphi^{n*}})^{1/n},$$
where $\rho(\widetilde{\varphi^{n*}})$ represents the spectral radius or maximal eigenvalue of the map $\widetilde{\varphi^{n*}} : NS(X)_{\Q} \lra NS(X)_{\Q}$.
\end{definition}
\noindent It is also possible to extend the notion of polarization, with respect to one rational map or, more general, in the sense of Kawaguchi \cite{kawaguchi}, associated to several rational maps:
\begin{definition}
Let $X$ be an algebraic variety and $\varphi_i : X \dashrightarrow X$ for $i=1,\dots,k$ dominant rational maps.  We say that the system $(X,\{\varphi_1,\dots,\varphi_k\},\L,d)$ is a polarized dynamical system of $k$ maps if there exist an ample line bundle $\L$ on $X$ such that $\bigotimes_{i=1}^k \varphi^*_i \L \cong \L^{d}$ for some $d > k$.
\end{definition}
The action of the maps $\sigma_1^*$, $\sigma_2^*$ and $\sigma_3^*$ on $\Pic(X)$ will provide a polarization for the system of three maps $\{\sigma_1,\sigma_2,\sigma_3\}$.  Also, under the condition that the Picard number is the least possible value $p(X)=3$, the first dynamical degree of any of the maps $\sigma_{ij}=\sigma_i \circ \sigma_j$ will be computed.  The computations will produce the same dynamical degree as the dynamical degree of the maps on K3 surfaces (Section 12 of \cite{Sil-Dynamical-degree}).

 \section{Four dimensional Varieties with three involutions} Let $\textbf{L}^{A} \subset \Pl^2 \times \Pl^2 \times \Pl^2$ be a family of varieties defined over a field $K$ by a single equation linear on each variable,
$$ \textbf{L}^A= \{ P \in \Pl^2 \times \Pl^2 \times \Pl^2 : L(x,y,z)=\sum^2_{i,j,k=0}a_{i,j,k}x_i y_jz_k=0\},$$
where $A=(a_{ijk})_{0 \leq i,j,k \leq 2}$.  A member of the family \textbf{L} comes equipped with projections
$$p_3=p_{xy} : \textbf{L} \lra \Pl^2 \times \Pl^2, $$
$$ p_2=p_{xz} : \textbf{L} \lra \Pl^2 \times \Pl^2, $$
$$ p_1=p_{yz} : \textbf{L} \lra \Pl^2 \times \Pl^2.$$
and the $\Pic(\textbf{L}) \cong \Z^3$ from the embedding $\textbf{L} \hookrightarrow \Pl^2 \times \Pl^2 \times \Pl^2$.  Using the adjunction formula we can get its canonical line bundle $$\omega_\textbf{L} \cong \cO_{\Pl^2 \times \Pl^2 \times \Pl^2}(-3,-3,-3) \otimes \cO_{\Pl^2 \times \Pl^2 \times \Pl^2}(\textbf{L})=\cO_{\Pl^2 \times \Pl^2 \times \Pl^2}(-2,-2,-2).$$
By choosing a section $Q=Q^{A} $ of $ \cO_{\textbf{L}}(2,2,2)$ and consider the variety $X=\var(Q)$ we get a variety with trivial canonical divisor $K_X \sim 0$.  Besides, by the weak lefschetz theorem, we have an injective map $\Z^3 \cong \Pic(L) \hookrightarrow \Pic(X)$ and we will get three distinct classes even in $NS(X)$ and therefore a Picard number $p(X) \geq 3$.  \\By varying the coefficients $A,B$ one obtains a family $X^{A,B}$ defined in $\Pl^2_K \times \Pl^2_K \times \Pl^2_K$ by equations
\begin{align*}
L(x,y,z)& =\sum^2_{i,j,k=0}a_{i,j,k}x_i y_jz_k=0,\\
 Q(x,y,z) &=\sum^2_{i,j,k,l,m,n=0}b_{i,j,k,l,m,n}x_ix_l y_j
y_m z_k z_n=0,
\end{align*}
where $A=(a_{ijk})$, $B=(b_{i,j,k,l,m,n})$ and all indices are moving in the set $\{0,1,2\}$.  The projections $p_1,p_2,p_3$ restricted to $X$ represent generically $(2:1)$ coverings of $\Pl^2 \times \Pl^2$.  Indeed when we fix two of the variables we get the intersection on $\Pl^2$ of a quadric and a line, which is general, will give two points $P_i,P_i' \in X$ for $i=1,2,3$ and will determine involutions $\sigma_1, \sigma_2,\sigma_3 : X \dashrightarrow X$.  The involutions $\sigma_i$ for $i=1,2,3$, will not be in general morphisms but just rational maps defined on certain open sets $U_i \subset X$.  We are interesting in studying the dynamics of the maps $\sigma_i$, but first we should devote some time to get familiar with the geometry of $X=X^{A,B}$.  We collect the coefficients of our variables using the following notation for $i,j,k$ in the set $\{0,1,2\}$
\begin{align*}
L_k^{x,y}(x,y) & =\sum^2_{i,j=0}a_{i,j,k}x_i y_j,  & Q^{x,y}_{k,n}(x,y)=\sum^2_{i,j,l,m=0}b_{i,j,k,l,m,n}x_ix_l y_j y_m, \\
L_j^{x,z}(x,z) & =\sum^2_{i,k=0}a_{i,j,k}x_i z_k,  & Q^{y,z}_{i,l}(y,z)=\sum^2_{j,k,m,n=0}b_{i,j,k,l,m,n} y_jy_m z_k z_n,\\
L_i^{y,z}(y,z) & =\sum^2_{j,k=0}a_{i,j,k} y_j z_k, & Q^{x,z}_{j,m}(x,z)=\sum^2_{i,k,l,n=0}b_{i,j,k,l,m,n}x_ix_l z_k z_n.
\end{align*}
Suppose, with the above notation in mind, that we want to study the action of $\sigma_3$ computing the solutions $(z_0,z_1,1)$ of the system
\begin{align*}
0= & L_0^{x,y}z_0+L_1^{x,y}z_1+L_2^{x,y}, \\
0= & Q^{x,y}_{0,0}z_0^2+Q^{x,y}_{1,1}z_1^2+Q^{x,y}_{2,2}+Q^{x,y}_{0,1}z_0z_1+Q^{x,y}_{0,2}z_0+Q^{x,y}_{1,2}z_1,
\end{align*}
assuming that $L_1^{x,y} \neq 0$ and replacing $z_1=\frac{-L_2^{x,y}-L_0^{x,y}z_0}{L_1^{x,y}}$ in the second equation gives
$G_0^{x,y} + H^{x,y}_{0,2} z_0 + G_2^{x,y}z_0^2=0$
where,
$$G_0^{x,y}=(L_1^{x,y})^2Q^{x,y}_{2,2}-L_1^{x,y}L_2^{x,y}Q^{x,y}_{1,2}+(L_2^{x,y})^2Q^{x,y}_{1,1},$$ $$G_2^{x,y}=(L_1^{x,y})^2Q^{x,y}_{0,0}-L_1^{x,y}L_0^{x,y}Q^{x,y}_{0,1}+(L_0^{x,y})^2Q^{x,y}_{1,1},$$ $$H^{x,y}_{0,2}=2L^{x,y}_0L^{x,y}_2Q_{1,1}^{x,y}-L^{x,y}_0L_1^{x,y}Q^{x,y}_{1,2}-L^{x,y}_2L_1^{x,y}Q_{0,1}^{x,y}+(L_1^{x,y})^2Q^{x,y}_{0,2},$$

and the map $\sigma_3$ that sends $(z_0,z_1,1) \mapsto (z'_0,z'_1,1)$ will be defined unless all the three coefficients $G_0^{x,y}, H^{x,y}_{0,2},G_2^{x,y}$ vanish.  So, we are forced, by a codimension checking, to work with rational maps $\sigma_i : X \dashrightarrow X$ and our first task will be, to locate where are these maps well defined morphisms. \\ Motivated by the above discussion we define for any permutation $(i,j,k)$ of $(0,1,2)$ the $(4,4)$-bi-homogeneous forms
$$G_k^{x,y}=(L_i^{x,y})^2Q^{x,y}_{j,j}-L_i^{x,y}L_j^{x,y}Q^{x,y}_{i,j}+(L_j^{x,y})^2Q^{x,y}_{i,i},$$
$$G_k^{y,z}=(L_i^{y,z})^2Q^{y,z}_{j,j}-L_i^{y,z}L_j^{y,z}Q^{y,z}_{i,j}+(L_j^{y,z})^2Q^{y,z}_{i,i},$$
$$G_k^{x,z}=(L_i^{x,z})^2Q^{x,z}_{j,j}-L_i^{x,z}L_j^{x,z}Q^{x,z}_{i,j}+(L_j^{x,z})^2Q^{x,z}_{i,i},$$
$$H^{x,y}_{i,j}=2L^{x,y}_iL^{x,y}_jQ_{kk}^{x,y}-L^{x,y}_iL_k^{x,y}Q^{x,y}_{jk}-L^{x,y}_jL_k^{x,y}Q_{ik}^{x,y}+(L_k^{x,y})^2Q^{x,y}_{ij},$$
$$H^{x,z}_{i,j}=2L^{x,z}_iL^{x,z}_jQ_{kk}^{x,z}-L^{x,z}_iL_k^{x,z}Q^{x,z}_{jk}-L^{x,z}_jL_k^{x,z}Q_{ik}^{x,z}+(L_k^{x,z})^2Q^{x,z}_{ij},$$
$$H^{y,z}_{i,j}=2L^{y,z}_iL^{y,z}_jQ_{kk}^{y,z}-L^{y,z}_iL_k^{y,z}Q^{y,z}_{jk}-L^{y,z}_jL_k^{y,z}Q_{ik}^{y,z}+(L_k^{y,z})^2Q^{y,z}_{ij},$$

For any $a,b,c \in \Pl^2_K$, the fibres of the projections $p_1, p_2$ and $p_3$ will be defined as $ X_{a,b}^z = p_3^{-1}(a,b)=L_{a,b}^z \cap  Q_{a,b}^z$, $ X_{b,c}^x = p_1^{-1}(b,c)=L_{b,c}^x \cap  Q_{b,c}^x$ and $X_{a,c}^y = p_2^{-1}(a,c)=L_{a,c}^y \cap  Q_{a,c}^y$; where
$$L_{a,b}^z=\{(a,b,z) \in \Pl^2 \times \Pl^2 \times \Pl^2 : L(a,b,z)=0   \},$$
 $$ Q_{a,b}^z=\{(a,b,z) \in \Pl^2 \times \Pl^2 \times \Pl^2 : Q(a,b,z)=0$$
$$L_{b,c}^x=\{(x,b,c) \in \Pl^2 \times \Pl^2 \times \Pl^2 : L(x,b,c)=0   \},$$
 $$Q_{b,c}^x=\{(x,b,c) \in \Pl^2 \times \Pl^2 \times \Pl^2 : Q(x,b,c)=0$$
$$L_{a,c}^y=\{(a,y,c) \in \Pl^2 \times \Pl^2 \times \Pl^2 : L(a,y,c)=0   \}, $$
 $$Q_{a,c}^y=\{(a,y,c) \in \Pl^2 \times \Pl^2 \times \Pl^2 : Q(a,y,c)=0.$$

\begin{definition}
We say that a fibre $ X_{a,b}^z, X_{b,c}^x$ or $X_{a,c}^y$ is degenerate if it has positive dimension.
\end{definition}
If the fibres $ X_{a,b}^z, X_{b,c}^x$ or $X_{a,c}^y$ are non-degenerate at $(a,b,c)$, they will consist of two points and the maps $\sigma_1, \sigma_2$ and $\sigma_3$ will be well defined morphisms at $(a,b,c) \in X$.  Following the outline of \cite{Comput_on_K3} we have the following result characterizing the degenerate fibres.
\begin{proposition} Let $[a,b,c] \in X$. \label{degenerate}
\begin{enumerate}
\item[(1)] $X_{a,b}^z$ is degenerate if and only if $$G_0^{x,y}(a,b)=G_1^{x,y}(a,b)=G_2^{x,y}(a,b)=H_{0,1}^{x,y}(a,b)=H_{0,2}^{x,y}(a,b)=H_{1,2}^{x,y}(a,b)=0.$$
\item[(2)] $X_{a,c}^y$ is degenerate if and only if $$G_0^{x,z}(a,c)=G_1^{x,z}(a,c)=G_2^{x,z}(a,c)=H_{0,1}^{x,z}(a,c)=H_{0,2}^{x,z}(a,c)=H_{1,2}^{x,z}(a,c)=0.$$
\item[(3)] $X_{b,c}^x$ is degenerate if and only if $$G_0^{y,z}(b,c)=G_1^{y,z}(b,c)=G_2^{y,z}(b,c)=H_{0,1}^{y,z}(b,c)=H_{0,2}^{y,z}(b,c)=H_{1,2}^{y,z}(b,c)=0.$$
\end{enumerate}
\end{proposition}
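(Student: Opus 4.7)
I will prove (1); parts (2) and (3) follow by the obvious relabelling of the three copies of $\Pl^2$.

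The starting observation is that $X_{a,b}^z = L_{a,b}^z \cap Q_{a,b}^z$ is the scheme-theoretic intersection, inside the $\Pl^2$ of $z$-coordinates, of a (possibly trivial) line with a (possibly trivial) conic. When both forms are non-trivial, B\'ezout guarantees the intersection has dimension at most zero unless the linear form divides the quadratic one, i.e., unless the conic contains the line. The two pathological cases $L(a,b,\cdot) \equiv 0$ or $Q(a,b,\cdot) \equiv 0$ also yield a degenerate fibre; but in these cases every monomial in the defining expressions for $G_k^{x,y}$ and $H_{i,j}^{x,y}$ carries two factors drawn either from the $L_i^{x,y}$'s or from the $Q_{i,j}^{x,y}$'s, so all six forms automatically vanish at $(a,b)$. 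I would dispose of these degeneracies at the outset.

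The engine of the proof is the polynomial identity
$$\bigl(L_k^{x,y}\bigr)^{2}\, Q(z) \;\equiv\; G_j^{x,y}\, z_i^{2} + G_i^{x,y}\, z_j^{2} + H_{i,j}^{x,y}\, z_i z_j \pmod{L(z)}$$
valid in $K[z_0,z_1,z_2]$ for every permutation $(i,j,k)$ of $(0,1,2)$. This is exactly the computation displayed in the text for $(i,j,k)=(0,1,2)$ executed in full generality; I would verify it by using the congruence $L_k z_k \equiv -L_i z_i - L_j z_j \pmod{L}$ to eliminate $z_k$ in $L_k^{2} Q$ and collecting the coefficients of $z_i^{2}$, $z_j^{2}$ and $z_i z_j$.

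Both directions then follow almost mechanically. For $(\Leftarrow)$, if all six forms vanish at $(a,b)$ the right-hand side is zero for each $k$, so $\bigl(L_k^{x,y}(a,b)\bigr)^{2} Q(a,b,z)$ lies in the principal ideal $\bigl(L(a,b,z)\bigr) \subset K[z]$. In the non-trivial case, at least one scalar $L_k^{x,y}(a,b)$ is a unit in $K$, so $L(a,b,z)$ divides $Q(a,b,z)$ and the conic contains the line. For $(\Rightarrow)$, in the non-trivial case write $Q(a,b,z) = L(a,b,z)\cdot S(z)$ with $S(z) = s_0 z_0 + s_1 z_1 + s_2 z_2$; then $Q_{i,i}^{x,y}(a,b) = L_i^{x,y}(a,b)\, s_i$ and $Q_{i,j}^{x,y}(a,b) = L_i^{x,y}(a,b)\, s_j + L_j^{x,y}(a,b)\, s_i$ for $i\neq j$. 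Substituting these identities into the defining formulas for $G_k^{x,y}$ and $H_{i,j}^{x,y}$ produces cancellations in pairs that annihilate every monomial. I expect the only real obstacle to be the routine but delicate index bookkeeping required to establish the key identity and to verify these cancellations; there is no conceptual difficulty once the identity is in hand.
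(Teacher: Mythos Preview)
Your plan is correct and follows the paper closely for the identity and for the $(\Leftarrow)$ direction: the paper likewise splits off the case $L(a,b,z)\equiv 0$, observes that then all $L_i^{x,y}(a,b)=0$ so every $G$ and $H$ vanishes, and in the non-trivial case uses the identity with a non-vanishing $L_k^{x,y}(a,b)$ to conclude $Q(a,b,z)\equiv 0 \pmod{L(a,b,z)}$.

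The genuine difference is in the $(\Rightarrow)$ direction when $L(a,b,z)\not\equiv 0$. The paper argues geometrically: for each $G_k^{x,y}$ it either notes the relevant $L_i$'s vanish or evaluates $Q$ at the specific point $(0,L_2,-L_1)$ (and analogues) on the line to force $G_k^{x,y}(a,b)=0$; then, feeding $G_k=0$ back into the three identities, it obtains $H_{i,j}^{x,y}(a,b)\,z_iz_j=0$ on the line and finishes by a small case analysis on whether $L_{a,b}^z$ is a coordinate line. Your approach is purely algebraic: factor $Q(a,b,z)=L(a,b,z)S(z)$, read off $Q_{i,i}=L_is_i$ and $Q_{i,j}=L_is_j+L_js_i$, and substitute. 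This is cleaner and avoids the case analysis; indeed one checks directly that $G_k=L_i^2L_js_j-L_iL_j(L_is_j+L_js_i)+L_j^2L_is_i=0$ and that all six terms in $H_{i,j}$ cancel in pairs. The paper's route is slightly more hands-on but makes the geometric content (points on the line lying on the conic) more visible; yours is more uniform bookkeeping. Either way the argument goes through without difficulty.
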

\begin{proof} The proof is identical to the proof of proposition 1.4 in \cite{Comput_on_K3}.  We do the proof of (1).  When we substitute $z_0=(L-L_1^{x,y}z_1-L_2^{x,y}z_2)/L_0^{x,y}, z_1=(L-L_0^{x,y}z_0-L_2^{x,y}z_2)/L_1^{x,y}$ and $ z_2=(L-L_1^{x,y}z_1-L_0^{x,y}z_0)/L_2^{x,y}$ into $Q$ respectively we get formulas:
$$(L_0^{x,y})^2Q(x,y,z) \equiv G_2^{x,y} z_1^2 + H_{1,2}^{x,y} z_1 z_2 + G_1^{x,y} z_2^2 \quad (mod L(x,y,z)),$$
$$(L_1^{x,y})^2Q(x,y,z) \equiv G_2^{x,y} z_0^2 + H_{0,2}^{x,y} z_0 z_2 + G_0^{x,y} z_2^2 \quad (mod L(x,y,z)),$$
$$(L_2^{x,y})^2Q(x,y,z) \equiv G_1^{x,y} z_0^2 + H_{0,1}^{x,y} z_0 z_1 + G_0^{x,y} z_1^2 \quad (mod L(x,y,z)).$$
Now, the proof is divided into two parts, depending on whether or not for the point $[a,b,c] \in \Pl^2 \times \Pl^2 \times \Pl^2$ we have $L(a,b,z) \equiv 0$.\\  If $L(a,b,z) \equiv 0$, then $ X_{a,b}^z = Q_{a,b}^z $ and the fibre is degenerate.  In this case $L_0^{a,b}=L_1^{a,b}=L_2^{a,b}=0$ will force $H^{x,y}_{i,j}(a,b)=G_k^{x,y}(a,b)=0$ and the proof is finished.\\
If $L(a,b,z) \neq 0$, one of the $L_i^{x,y}(a,b) \neq 0$ and the fact that $G_0^{x,y}(a,b)=G_1^{x,y}(a,b)=G_2^{x,y}(a,b)=H_{0,1}^{x,y}(a,b)=H_{0,2}^{x,y}(a,b)=H_{1,2}^{x,y}(a,b)=0$ forces $Q(a,b,z) \equiv 0 \quad (mod  L(a,b,z))$ and hence $ X_{a,b}^z$ is degenerate containing the entire line $L_{a,b}^z$.\\
If $L(a,b,z) \neq 0$ and the fibre $X_{a,b}^z$ is degenerate we must have $L_{a,b}^z \subset Q_{a,b}^z$.  We are going to proof that $G_0^{x,y}(a,b)=G_1^{x,y}(a,b)=G_2^{x,y}(a,b)=H_{0,1}^{x,y}(a,b)=H_{0,2}^{x,y}(a,b)=H_{1,2}^{x,y}(a,b)=0$.  First let's do $G_0^{x,y}(a,b)=0$.  If $L^{x,y}_1(a,b)=L^{x,y}_2(a,b)=0$, this follows from the definition, otherwise $(0,L^{x,y}_2(a,b),-L^{x,y}_1(a,b)) \in L^z_{a,b}$ and therefore must belong to $Q_{a,b}^z $, when we evaluate we get
$$0=Q_{1,1}^{x,y}(a,b) (L^{x,y}_2(a,b))^2 - Q_{1,2}^{x,y}  L^{x,y}_2(a,b)L^{x,y}_1(a,b)+ Q_{2,2}^{x,y}(L^{x,y}_1(a,b))^2$$
So $G^{x,y}_0(a,b)=0$.  In a similar way we do $G_1^{x,y}(a,b)=G_2^{x,y}(a,b)=0$.  The substitution of the results $G_i^{x,y}(a,b)=0$ in the equations and evaluations at $x=a, y=b$ will give
$$H_{1,2}^{x,y}(a,b) z_1 z_2=H_{0,2}^{x,y}(a,b) z_0 z_2=H_{1,0}^{x,y}(a,b) z_1 z_0=0$$
for all points $(z_0,z_1,z_2) \in L^z(a,b)$.  If $L^z(a,b)$ is the line $z_1=0$, then $L_0^{x,y}(a,b)=L_2^{x,y}(a,b)=0$ and $H_{1,2}^{x,y}(a,b)=0$ using the definition. If $L^z(a,b)$ is the line $z_2=0$, then $L_1^{x,y}(a,b)=L_2^{x,y}(a,b)=0$ and $H_{1,2}^{x,y}(a,b)=0$ will be again equal to zero.  Otherwise if $L^z_{a,b}$ is none of the lines $z_1=0$ or $z_2=0$, then $H_{1,2}^{x,y}(a,b)=0$ from the previous line.  The other cases for $H^{x,y}_{i,j}(a,b)=0$ are solved similarly.
\end{proof}
We can now define open sets $U_1,U_2,U_3$ in such a way that the dominant rational maps $\sigma_i : X \dashrightarrow X$ are bijective morphisms $$\sigma_i : U_i \lra U_i.$$
\begin{align*}
U_1=X-\{ (a,b,c) \in X & : G_0^{y,z}(b,c)=   G_1^{y,z}(b,c)=G_2^{y,z}(b,c)=0 \\ & H_{0,1}^{y,z}(b,c)=H_{0,2}^{y,z}(b,c)=H_{1,2}^{y,z}(b,c)=0\},\\
U_2=X-\{ (a,b,c) \in X & : G_0^{x,z}(a,c)=  G_1^{x,z}(a,c)=G_2^{x,z}(a,c)=0 \\ & H_{0,1}^{x,z}(a,c)=H_{0,2}^{x,z}(a,c)=H_{1,2}^{x,z}(a,c)=0\},\\
U_3=X-\{ (a,b,c) \in X & : G_0^{x,y}(a,b)= G_1^{x,y}(a,b)=G_2^{x,y}(a,b)=0 \\ & H_{0,1}^{x,y}(a,b)=H_{0,2}^{x,y}(a,b)=H_{1,2}^{x,y}(a,b)=0  \}.
\end{align*}
The maps $\sigma_1, \sigma_2, \sigma_3$ induce maps on divisors: Let's consider $Y$ a closed subvariety of codimension one and $\sigma_i^*Y= \overline{\sigma_i^{-1}Y}$, the Zariski closure of the pre-image.  In this way we induce maps on Weil divisors, that respect linear and numerical equivalence and descend to maps $$\sigma_i^* : \Pic(X) \lra \Pic(X) \qquad \tilde{\sigma}_i^* : NS(X)_\Q \lra NS(X)_\Q.$$ To study the action of the $\sigma_i^*$ on $\Pic(X)$ we denote by $H,H'$ hyperplane sections representing the two fundamental classes in $\Pic(\Pl^2 \times \Pl^2)$,
\begin{align*}
H & =\{((a_0:a_1:a_2),(b_0:b_1:b_2)) \in \Pl^2 \times \Pl^2 : a_0=0 \},\\
H' &=\{((a_0:a_1:a_2),(b_0:b_1:b_2)) \in \Pl^2 \times \Pl^2 : b_0=0 \}.
\end{align*}
and the divisors $D_x,D_y,D_z$ on $X$ defined by:
$$ D_x = \{ P \in X : x_0=0\},\quad
D_y = \{ P \in X : y_0=0\}, \quad
D_z= \{ P \in X : z_0=0\}. $$
The pullbacks of $H,H'$ by the different projections give back the $D_x,D_y,D_z$,
$$
p_{xy}^*H = p_3^* H=   D_x, \quad
p_{xy}^*H'=  p_3^* H'=    D_y, $$
$$ p_{xz}^*H=  p_2^* H=    D_x, \qquad
p_{xz}^*H'=  p_2^* H'=   D_z, $$
$$p_{yz}^*H=  p_1^* H=   D_y, \qquad
p_{yz}^*H'=  p_1^* H'=   D_z. $$

\begin{lemma} \label{projections} We have the following equivalences of divisors in $\Div(X)$:
\begin{enumerate}
\item $p_{1*} p_2^* H \sim 4H+4H'; $
\item $p_{2*} p_1^* H \sim 4H+4H'; $
\item $p_{3*} p_1^* H' \sim 4H+4H';$
\end{enumerate}
\end{lemma}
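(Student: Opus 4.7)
The plan is to unravel each identity using the pullback dictionary already displayed in the excerpt, then reduce the whole computation to a single intersection-theoretic calculation carried out in the ambient $\Pl^2\times\Pl^2\times\Pl^2$. From that dictionary, $p_2^*H=D_x$, $p_1^*H=D_y$ and $p_1^*H'=D_z$, so (1), (2), (3) amount respectively to
\begin{equation*}
p_{1*}D_x\sim 4H+4H',\qquad p_{2*}D_y\sim 4H+4H',\qquad p_{3*}D_z\sim 4H+4H',
\end{equation*}
each interpreted in the Picard group of the appropriate $\Pl^2\times\Pl^2$. Because $\Pic(\Pl^2\times\Pl^2)=\Z H\oplus\Z H'$, every such identity reduces to pinning down two integer coefficients.

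For (1), write $p_{1*}D_x=aH+bH'$. The complementary curve classes $H\cdot H'^2$ and $H^2\cdot H'$ determine the coefficients, and the projection formula converts the computation into an intersection number on $X$:
\begin{align*}
a &= (p_{1*}D_x)\cdot H\cdot H'^2 \;=\; D_x\cdot p_1^*(H\cdot H'^2) \;=\; D_x\cdot D_y\cdot D_z^2,\\
b &= (p_{1*}D_x)\cdot H^2\cdot H' \;=\; D_x\cdot p_1^*(H^2\cdot H') \;=\; D_x\cdot D_y^2\cdot D_z.
\end{align*}

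I would then evaluate these intersection numbers by lifting everything to the ambient sixfold. Writing $h_x,h_y,h_z$ for the pullback hyperplane classes of the three factors of $\Pl^2\times\Pl^2\times\Pl^2$, the fundamental class of $X$ is $[X]=[L]\cdot[Q]=(h_x+h_y+h_z)(2h_x+2h_y+2h_z)=2(h_x+h_y+h_z)^2$, and the top intersection rule is $h_x^a h_y^b h_z^c=1$ when $(a,b,c)=(2,2,2)$ and $0$ otherwise. A short expansion of $h_x h_y h_z^2\cdot[X]$ and $h_x h_y^2 h_z\cdot[X]$ shows that in each case only one cross term of $(h_x+h_y+h_z)^2$ survives (the $2h_xh_y$ term, respectively the $2h_xh_z$ term), each contributing $2\cdot 2=4$. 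Hence $a=b=4$, which is (1).

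Statements (2) and (3) follow from the identical computation after permuting the roles of the three projective factors; because the ambient calculation is symmetric in $h_x,h_y,h_z$, no fresh input is needed. The only substantive step is the monomial expansion in the ambient sixfold, which is entirely mechanical, so I anticipate no real obstacle beyond bookkeeping.
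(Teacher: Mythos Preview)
Your argument is correct, but it proceeds by a genuinely different route than the paper. The paper argues explicitly: it writes $p_2^*H=D_x=\{x_0=0\}\cap X$ as the locus cut out by
\[
L_1^{y,z}x_1+L_2^{y,z}x_2=0,\qquad Q_{1,1}^{y,z}x_1^2+Q_{1,2}^{y,z}x_1x_2+Q_{2,2}^{y,z}x_2^2=0,
\]
eliminates $x_1,x_2$, and observes that the resulting equation is exactly $G_0^{y,z}(y,z)=0$, a $(4,4)$-form in $(y,z)$; hence $p_{1*}p_2^*H\sim 4H+4H'$. Your approach instead computes the two coefficients numerically, via the projection formula and the identity $[X]=2(h_x+h_y+h_z)^2$ in the Chow ring of the ambient sixfold. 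Both are short and valid. The paper's method has the advantage of actually producing the defining equation $G_0^{y,z}$ of the image, which is not incidental: these very forms $G_k$ and $H_{i,j}$ reappear in Proposition~\ref{degenerate} and in the definition of the open sets $U_i$. Your method, on the other hand, makes the symmetry among the three statements completely transparent and requires no elimination, at the cost of not exhibiting the explicit equation. One small caveat worth making explicit in your write-up is that $D_x$ is not contracted by $p_1$ (so that $p_{1*}D_x$ really is a divisor class); this is clear once you know the intersection numbers are nonzero, or directly since a generic fibre of $p_1$ meets $\{x_0=0\}$ in at most one of its two points.
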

\begin{proof}  The prove of all parts will be analogous and straightforward from the definition of $H,H'$ and the $p_i$'s.  Let's see for example the proof of (i).  The pull-back $p_2^* H = \{ P \in X : x_0=0\}$ is given by the two equations
\begin{equation*} \label{proj}
L_1^{y,z} x_1 +  L_2^{y,z} x_2=0, \qquad Q_{1,1}^{y,z} x_1^2 + Q_{1,2}^{y,z} x_1 x_2 + Q_{2,2}^{y,z}x_2^2=0.
\end{equation*}
When we project onto $(y,z)$ we eliminate $x_1, x_2$ and get the equation $$G_0^{y,z}=(L_1^{y,z})^2Q^{y,z}_{2,2}-L_1^{y,z}L_2^{y,z}Q^{y,z}_{1,2}+(L_2^{y,z})^2Q^{y,z}_{1,1}=0.$$
where $G_0^{y,z}$ is a $(4,4)$-bihomogeneous form in $y$ and $z$, and therefore $p_{1*} p_2^* H \sim 4H+4H'$. \end{proof}

Applying lemma \ref{projections} we obtain the pushforwards:
$$p_{1*}(D_x) \sim  4H+4H', \qquad  p_{2*}(D_y) \sim 4H+4H', \qquad p_{3*}(D_z) \sim 4H+4H',$$

and the action of the $\sigma^*_i$'s on the divisors $D_x,D_y,D_z$:
$$\sigma_1^*(D_x)=p_1^*p_{1*} D_x -D_x \sim 4D_y+4D_z-D_x ,$$
$$\sigma_1^*(D_y)=\sigma_1^* p_1^* H = (p_1 \circ \sigma_1)^* H =  D_y,$$
$$\sigma_1^*(D_z)=\sigma_1^* p_1^* H'=(p_1 \circ \sigma_1)^* H' = D_z,$$
$$ \sigma_2^*(D_x)=\sigma_2 p_2^*H= (p_2 \circ \sigma_2)^* H' = D_x,$$
$$\sigma_2^*(D_y)=p_2^* p_{2*} D_y - D_y \sim 4D_x + 4D_z - D_y,$$
$$\sigma_2^*(D_z)=\sigma_2^*p^*_2H'=(p_2 \circ \sigma_2)^* H' = D_z,$$
$$\sigma_3^*(D_x)=\sigma_3^* p_3^* H =(p_3 \circ \sigma_3)^* H = D_x,$$
$$\sigma_3^*(D_y)=\sigma_3^* p_3^* H' =(p_3 \circ \sigma_3)^* H' = D_y,$$
$$\sigma_3^*(D_z)=p^*_3 p_{3*} D_z - D_z \sim 4D_x+4D_y-D_z.$$
Using the actions of the $\sigma^*_i$ we can get a polarizations by a very ample line bundle for the system of involutions $\sigma_1,\sigma_2,\sigma_3$.
\begin{proposition}
Suppose that $r_x,r_y,r_z $ are positive real numbers and we have the polarization by three maps
$$\sum_i \sigma_i^*(r_x D_x + r_yD_y + r_z D_z) \sim d (r_x D_x + r_yD_y + r_z D_z),$$
in $\Pic(X) \otimes \R$.  Then $d=9$ and $r_x=r_y=r_z=1$.
\end{proposition}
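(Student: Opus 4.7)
The plan is to reduce the proposition to an eigenvalue problem on the rank-three subspace of $\Pic(X) \otimes \R$ spanned by $D_x, D_y, D_z$, and then use positivity to pick out the unique solution. First I would use the explicit formulas for $\sigma_i^* D_x, \sigma_i^* D_y, \sigma_i^* D_z$ computed immediately above the proposition to sum the actions $\sum_i \sigma_i^*$ applied to each generator. This yields
\[
\sum_i \sigma_i^*(D_x) \sim D_x + 4D_y + 4D_z, \quad \sum_i \sigma_i^*(D_y) \sim 4D_x + D_y + 4D_z, \quad \sum_i \sigma_i^*(D_z) \sim 4D_x + 4D_y + D_z,
\]
since for each generator two of the three involutions act trivially and the third contributes the nontrivial term $4 \cdot (\text{other two}) - (\text{itself})$.

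Next, I would expand $\sum_i \sigma_i^*(r_x D_x + r_y D_y + r_z D_z)$ by linearity and match coefficients with $d(r_x D_x + r_y D_y + r_z D_z)$. Because $D_x, D_y, D_z$ are linearly independent in $\Pic(X) \otimes \R$ (by the weak Lefschetz injection $\Z^3 \cong \Pic(\textbf{L}) \hookrightarrow \Pic(X)$ used in the paper's setup), the equivalence forces the three scalar equations
\[
r_x + 4r_y + 4r_z = d r_x, \qquad 4r_x + r_y + 4r_z = d r_y, \qquad 4r_x + 4r_y + r_z = d r_z.
\]
This says that $(r_x, r_y, r_z)^T$ is an eigenvector, with eigenvalue $d$, of the symmetric matrix
\[
M = \begin{pmatrix} 1 & 4 & 4 \\ 4 & 1 & 4 \\ 4 & 4 & 1 \end{pmatrix}.
\]

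The core of the argument is then the spectral analysis of $M$. I would observe that $M = 5 J - 4 I$, where $J$ is the all-ones matrix; since $J$ has eigenvalues $3$ and $0, 0$, the eigenvalues of $M$ are $9, -3, -3$, with $9$ having the one-dimensional eigenspace spanned by $(1,1,1)^T$ and $-3$ having the eigenspace $\{(s_1,s_2,s_3) : s_1+s_2+s_3 = 0\}$. The hypothesis $r_x, r_y, r_z > 0$ rules out the $(-3)$-eigenspace (where the coordinates necessarily sum to zero and cannot all be positive), so the only admissible eigenvalue is $d = 9$ and the only admissible eigenvector direction is $(1,1,1)$, giving $r_x = r_y = r_z$; rescaling the polarization normalizes this common value to $1$. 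I do not expect any serious obstacle here — the main step is recognizing the system as an eigenproblem for $M$ and noting that Perron-Frobenius positivity uniquely selects the $(1,1,1)$ eigenvector.
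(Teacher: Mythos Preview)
Your argument is correct and follows essentially the same route as the paper: sum the $\sigma_i^*$-actions, match coefficients using the independence of $D_x,D_y,D_z$, and solve the resulting $3\times 3$ eigenvalue problem, with positivity (equivalently, the paper's implicit use of $d>3$) singling out $d=9$ and the eigendirection $(1,1,1)$. One small slip: the decomposition should be $M = 4J - 3I$, not $5J - 4I$; your stated eigenvalues $9,-3,-3$ and eigenspaces are nonetheless correct for the actual $M$.
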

\begin{proof}
When we add up the actions of $\sigma_i^*$ on $r_x D_x + r_yD_y + r_z D_z$, and equal that to $d(r_x D_x + r_yD_y + r_z D_z)$ for some $d > 3$, we get the system of linear equations:
\begin{align*}
r_x+4r_y+4r_z & = dr_x, \\
4r_x+r_y+4r_z & = dr_y, \\
4r_x+4r_y+r_z & = dr_z.
\end{align*}
The determinant is $(9-d)(3+d)^3$ and the value of $d=9$ gives $r_x=r_y=r_z=1$.
\end{proof}

\begin{proposition}
The maps $\sigma_i$ and $\sigma_{ij}=\sigma_i \circ \sigma_j$, for $i,j \in \{0,1,2\}$, satisfy the properties:
\begin{itemize}
\item[(1)] $(\sigma_i \circ \sigma_j)^*=\sigma^*_j \circ \sigma^*_i$,
\item[(2)] $ (\sigma_{ij}^{n})^*=(\sigma_{ij}^{*})^n$.
\end{itemize}
\end{proposition}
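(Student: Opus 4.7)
The plan is to reduce both assertions to the standard functoriality of pullback for morphisms, by exploiting that each $\sigma_i$ is a birational involution whose indeterminacy locus in the smooth $4$-fold $X$ has codimension $\geq 2$.

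First I would verify the codimension estimate. By Proposition \ref{degenerate} and the definitions of $U_1,U_2,U_3$, the indeterminacy locus $B_i = X \setminus U_i$ is cut out inside $X$ by the simultaneous vanishing of the six bi-homogeneous forms $G_0, G_1, G_2, H_{0,1}, H_{0,2}, H_{1,2}$ in the pair of variables complementary to $i$. Since these are six equations pulled back from $\Pl^2 \times \Pl^2$ to a $4$-dimensional $X$, and are generically independent for generic $A,B$, the locus $B_i$ has codimension $\geq 2$ in $X$. This is the crucial structural fact.

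For part (1), I would pick the open set
\[
V = U_j \cap \sigma_j^{-1}(U_i) \subset X.
\]
Because $\sigma_j : U_j \to U_j$ is an isomorphism, $\sigma_j^{-1}(X \setminus U_i)$ still has codimension $\geq 2$, so $X \setminus V \subset (X \setminus U_j) \cup \sigma_j^{-1}(X \setminus U_i)$ has codimension $\geq 2$. On $V$ both $\sigma_j$ and $\sigma_i \circ \sigma_j$ are genuine morphisms, so the functorial identity $(\sigma_i \circ \sigma_j)^* = \sigma_j^* \circ \sigma_i^*$ for morphisms holds on $V$. For an irreducible divisor $D$ on $X$, the classes $(\sigma_i \circ \sigma_j)^*D$ and $\sigma_j^*\sigma_i^*D$ are each defined as Zariski closures of the corresponding divisors on $V$; since divisor classes on the smooth variety $X$ are determined by their restriction to the open set $V$ whose complement has codimension $\geq 2$, the two classes coincide in $\Pic(X)$, and hence also in $NS(X)_\Q$.

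Part (2) I would prove by induction on $n$, repeatedly applying part (1): writing $\sigma_{ij}^n = \sigma_{ij} \circ \sigma_{ij}^{n-1}$ and noting that the indeterminacy locus of $\sigma_{ij}^{n-1}$ is still of codimension $\geq 2$ (being a finite union of preimages under isomorphisms of codimension $\geq 2$ subsets), I get $(\sigma_{ij}^n)^* = (\sigma_{ij}^{n-1})^* \circ \sigma_{ij}^* = (\sigma_{ij}^*)^{n-1} \circ \sigma_{ij}^* = (\sigma_{ij}^*)^n$. The main obstacle, and the only nontrivial input, is the codimension bound on the indeterminacy locus; once this is in hand, everything else is the standard "pullback is controlled by the complement of a codimension $\geq 2$ set" argument. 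If the six forms in Proposition \ref{degenerate} failed to cut out a subvariety of codimension $\geq 2$ in $X$, extra flavor (exceptional divisors contributing to pullback) would appear and the identity $(f \circ g)^* = g^* \circ f^*$ would generally fail, so pinning down this codimension is really the heart of the argument.
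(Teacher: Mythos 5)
Your proposal is correct in outline, but it reaches the identity $(\sigma_i\circ\sigma_j)^*=\sigma_j^*\circ\sigma_i^*$ by a genuinely different mechanism than the paper. The paper argues directly at the level of closures of images: for an irreducible $Y$ it shows, by a continuity/sequence argument, that $\overline{\tau'\bigl(\overline{\tau(Y\cap U_\tau)}\cap U_{\tau'}\bigr)}=\overline{\tau'\bigl(\tau(Y\cap U_\tau)\cap U_{\tau'}\bigr)}$, i.e.\ that inserting an intermediate Zariski closure does not change the final closure; the codimension of the indeterminacy loci is never invoked explicitly. You instead pass to the common domain $V=U_j\cap\sigma_j^{-1}(U_i)$, establish $\operatorname{codim}(X\setminus V)\geq 2$, and quote functoriality of pullback for morphisms together with the identification of divisors (or divisor classes) on $X$ with those on an open set whose complement has codimension at least $2$. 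Your route is the more standard and, once the codimension bound is in hand, the more airtight one: the paper's sequential argument is informal about topology and silently assumes the divisors in question meet the domains of definition. The trade-off is that your argument genuinely needs the codimension bound, and you only assert it ("generically independent for generic $A,B$") rather than prove it; the paper never verifies it either, so this is a shared debt rather than a new gap. It would be worth making explicit that the real reason no exceptional divisor appears is that each $\sigma_i$ restricts to an isomorphism of $U_i$, so no divisor is contracted into an indeterminacy locus --- this is the fact your step "$\sigma_j^{-1}(X\setminus U_i)$ still has codimension $\geq 2$" is using, and it is also what makes the paper's closure argument work. Your part (2) is the same induction as the paper's.
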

\begin{proof}
In general, given two rational maps $\tau : X \dashrightarrow X$ and $\tau' : X \dashrightarrow X$ defining involutions $\tau : U_{\tau} \lra U_{\tau}$ and $\tau' : U_{\tau'} \lra U_{\tau'}$ on open sets $U_{\tau}$ and $U_{\tau'}$ respectively, we will have $(\tau \circ \tau')^*=\tau'^* \circ \tau^*$.  Let $Y$ be an irreducible subvariety.  If $P \in \overline{\tau(Y \cap U_{\tau})} \cap U_{\tau'}$, there exist a sequence $P_n \rightarrow P$, with $P_n \in \tau(Y \cap U_{\tau}) \cap U_{\tau'}$.  Therefore $\tau'(P_n) \rightarrow \tau'(P)$ and $\tau'(P) \in \overline{\tau' (\tau (Y \cap U_{\tau})) \cap U_{\tau'})}$.  In other words $\overline{\tau'(\overline{\tau (Y \cap U_{\tau})} \cap U_{\tau'})} \subset \overline{\tau'(\tau (Y \cap U_{\tau})\cap U_{\tau'})}$, so this two sets must be equal and $(\tau \circ \tau')^*=\tau'^* \circ \tau^*$.  For the first part of the theorem we take $\sigma_i=\tau$ and $\sigma_j=\tau'$.  For the second part we proceed by induction and use the result to proof the induction step.  If we suppose that $ (\sigma_{ij}^{n})^*=(\sigma_{ij}^{*})^n$ is true, then $ (\sigma_{ij}^{*})^{n+1}=\sigma_{ij}^*((\sigma_{ij}^*)^n)=\sigma_{ij}^*((\sigma_{ij}^n)^*)$
By our result above the last equals to $(\sigma_{ij}^{n+1})^{*}$. \end{proof}

\subsection{Computation of dynamical degree}
In this subsection we study the action induced by the maps $\sigma_{ij}=\sigma_i \circ \sigma_j$ on the subspace  $V=\Span(D_x,D_y,D_z)$ of $\Pic(X) \otimes \R$.  As an application we will be able to get the dynamical degree of those maps for members of the family with Picard number $p(X)=3$.
\begin{theorem}
Let $\sigma_{ij}$ be the rational dominant map $\sigma_i \circ \sigma_j : X \dashrightarrow X$. Let $V$ be the subspace of $\Pic(X) \otimes \R$ spanned by $D_x, D_y, D_z$ and consider the action of $\sigma_{ij}^{*n} : V \lra V$.  The eigenvalues of $\sigma_{ij}^{*n}|V$ belong to the set $\{1,\beta^n, \beta'^{n} \}$, where $\beta=7+4\sqrt{3}$ and $\beta'=\frac{1}{\beta}$.
\end{theorem}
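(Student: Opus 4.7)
The plan is to represent $\sigma_{ij}^*$ as an explicit $3 \times 3$ integer matrix on $V$ in the ordered basis $(D_x, D_y, D_z)$, factor its characteristic polynomial, and then pass to the $n$-th power via part~(2) of the preceding proposition.

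First I would read off the matrices of $\sigma_1^*, \sigma_2^*, \sigma_3^*$ on $V$ from the nine formulas derived just before the polarization proposition. Each $\sigma_i^*$ fixes the two coordinate divisors $D_j, D_k$ with $j,k \neq i$ and sends the remaining divisor to $4D_j + 4D_k - D_i$, so $V$ is $\sigma_i^*$-invariant and the matrices are
\[
M_1 = \begin{pmatrix} -1 & 0 & 0 \\ 4 & 1 & 0 \\ 4 & 0 & 1 \end{pmatrix}, \quad
M_2 = \begin{pmatrix} 1 & 4 & 0 \\ 0 & -1 & 0 \\ 0 & 4 & 1 \end{pmatrix}, \quad
M_3 = \begin{pmatrix} 1 & 0 & 4 \\ 0 & 1 & 4 \\ 0 & 0 & -1 \end{pmatrix}.
\]

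Next, I would apply part~(1) of the preceding proposition, $(\sigma_i \circ \sigma_j)^* = \sigma_j^* \circ \sigma_i^*$, so that the matrix of $\sigma_{ij}^*$ on $V$ is the product $M_j M_i$. By the manifest $S_3$-symmetry of the construction of $X$ under permutations of the three $\Pl^2$ factors, it is enough to carry out one case, say $M_2 M_1$. A short multiplication produces a block-triangular matrix with the eigenvalue $1$ sitting in the position of $D_z$ (fixed by both $\sigma_1$ and $\sigma_2$) and an upper-left $2 \times 2$ block of trace $14$ and determinant $1$. The characteristic polynomial therefore factors as $(\lambda - 1)(\lambda^2 - 14\lambda + 1)$, and the two remaining roots are $7 \pm 4\sqrt{3} = \beta, \beta'$, which indeed satisfy $\beta\beta' = 1$.

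Finally, since $1, \beta, \beta'$ are three distinct real numbers, $\sigma_{ij}^*|V$ is diagonalizable. Applying part~(2) of the preceding proposition gives $\sigma_{ij}^{*n}|V = (\sigma_{ij}^*|V)^n$, and the eigenvalues of the $n$-th power of a diagonalizable map are the $n$-th powers of its eigenvalues, yielding exactly the set $\{1, \beta^n, \beta'^n\}$. The only step requiring real care is keeping the reversed order in $\sigma_j^* \circ \sigma_i^*$ straight and identifying which coordinate is fixed for each ordered pair; the $S_3$-symmetry is what then spares us from repeating the eigenvalue computation for the other composites.
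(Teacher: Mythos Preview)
Your argument is correct and follows the same strategy as the paper: write $\sigma_{ij}^*|_V$ as a $3\times 3$ matrix, compute its characteristic polynomial $(\lambda-1)(\lambda^2-14\lambda+1)$, observe the three roots are distinct so the map is diagonalizable, and then pass to powers via part~(2) of the preceding proposition.

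The execution differs in a useful way. The paper simply lists all six product matrices and invokes SAGE to check they share the characteristic polynomial. You instead record the three involution matrices $M_1,M_2,M_3$, multiply once, and exploit two structural facts to avoid the remaining computations: the block-triangular shape (the coordinate $D_k$ fixed by both $\sigma_i$ and $\sigma_j$ splits off the eigenvalue $1$, leaving a $2\times 2$ block of trace $14$ and determinant $1$), and the permutation symmetry among $M_1,M_2,M_3$ to transport that calculation to every ordered pair $(i,j)$. One small wording point: the $S_3$-symmetry you are using is a symmetry of the matrices $M_i$ (each is a permutation-conjugate of the others), not of a particular $X^{A,B}$; since your argument needs only the former, this is harmless, but it is worth phrasing it that way.
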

\begin{proof}
The action of the maps $\sigma_{12}^*$, $\sigma_{31}^*$, $\sigma_{31}^*$, $\sigma_{32}^*$, $\sigma_{13}^*$ and $\sigma_{23}^*$ with respect to that base $\{D_x,D_y,D_z\}$ is given respectively by the matrices
$$\sigma^*_{12}=\left(\begin{array}{rrr}
-1 & -4 & 0 \\
4 & 15 & 0 \\
4 & 20 & 1
\end{array}\right) \qquad \sigma^*_{13}=\left(\begin{array}{rrr}
15 & 0 & 4 \\
20 & 1 & 4 \\
-4 & 0 & -1
\end{array}\right)$$
$$ \sigma^*_{12}=\left(\begin{array}{rrr}
15 & 4 & 0 \\
-4 & -1 & 0 \\
20 & 4 & 1
\end{array}\right) \qquad \sigma^*_{23}=\left(\begin{array}{rrr}
1 & 20 & 4 \\
0 & 15 & 4 \\
0 & -4 & -1
\end{array}\right)$$
$$\sigma^*_{31}=\left(\begin{array}{rrr}
-1 & 0 & -4 \\
4 & 1 & 20 \\
4 & 0 & 15
\end{array}\right) \qquad \sigma^*_{32}=\left(\begin{array}{rrr}
1 & 4 & 20 \\
0 & -1 & -4 \\
0 & 4 & 15
\end{array}\right)$$
With the help of SAGE we find that the six matrices are sharing the same characteristic polynomial $p=-(\lambda-1)(\lambda^2-14\lambda+1)$.  The roots of $p(\lambda)$ are $\{1,\beta, \beta' \}$ with $\beta=7+4\sqrt{3}$ and $\beta'=1/\beta$, therefore all the six matrices are diagonalizable and the eigenvalues of the the powers are from the set $\{1,\beta^n,\beta'^n\}$.
\end{proof}

\begin{corollary}
Suppose that the Picard number $p(X)=3$, then the first dynamical degree of $\sigma_{ij}$,
$\delta_{\sigma_{ij}}=\beta.$

\end{corollary}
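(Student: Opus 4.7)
The plan is to reduce the computation of $\delta_{\sigma_{ij}}$ on all of $NS(X)_{\Q}$ to the eigenvalue computation on the subspace $V=\Span(D_x,D_y,D_z)$ carried out in the preceding theorem. Under the Picard number hypothesis, this reduction is immediate, after which the definition of the dynamical degree directly outputs $\beta$.

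First I would note that the three classes $D_x,D_y,D_z$ are linearly independent in $NS(X)_{\Q}$; this was established in Section 2 via the weak Lefschetz theorem, which produced three distinct surviving classes and the inequality $p(X)\geq 3$. Under the hypothesis $p(X)=3$, the inclusion $V\subseteq NS(X)_{\Q}$ must therefore be an equality, so the operator $\tilde\sigma_{ij}^{\,*}$ on $NS(X)_{\Q}$ appearing in the definition of $\delta_{\varphi}$ coincides with the restriction $\sigma_{ij}^{*}|V$ analyzed in the previous theorem.

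Next, I would invoke part (2) of the previous proposition, which gives $(\sigma_{ij}^{n})^{*}=(\sigma_{ij}^{*})^{n}$, so that $\widetilde{\sigma_{ij}^{n*}}$ is literally the $n$-th power of one of the six $3\times 3$ matrices listed there. Since these matrices share the characteristic polynomial $-(\lambda-1)(\lambda^{2}-14\lambda+1)$ and are diagonalizable, their $n$-th powers have eigenvalues in $\{1,\beta^{n},\beta'^{\,n}\}$. Because $\beta=7+4\sqrt{3}>1$ and $0<\beta'=1/\beta<1$, the spectral radius is $\rho(\widetilde{\sigma_{ij}^{n*}})=\beta^{n}$, and substituting into the definition gives
$$\delta_{\sigma_{ij}}=\limsup_{n\rightarrow\infty}(\beta^{n})^{1/n}=\beta.$$

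The only place where the hypothesis $p(X)=3$ actually enters is the identification $V=NS(X)_{\Q}$; without it one would still know that $\beta^{n}$ appears among the eigenvalues of the action on the full N\'eron--Severi space, but one could not rule out strictly larger eigenvalues coming from a complement of $V$. Thus the main (mild) obstacle is simply the bookkeeping of ensuring this identification, together with invoking $(\sigma_{ij}^{n})^{*}=(\sigma_{ij}^{*})^{n}$ so that no matrix computation beyond the previous theorem is required.
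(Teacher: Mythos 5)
Your proposal is correct and follows essentially the same route as the paper: identify $V$ with $NS(X)_{\Q}$ using $p(X)=3$, invoke $(\sigma_{ij}^{n})^{*}=(\sigma_{ij}^{*})^{n}$, and read off the spectral radius $\beta^{n}$ from the eigenvalue computation of the preceding theorem. Your added remark pinpointing exactly where the hypothesis $p(X)=3$ is used is a helpful clarification but does not change the argument.
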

\begin{proof}
The divisors $D_x,D_y,D_z$ represent three distinct classes in $NS(X)_\Q$.  If the Picard number $p(X)=3$, then we have $NS(X)_\Q \cong V_\Q$.  The first dynamical degree of any of the maps $\sigma_{ij}$ is:
$$ \delta_{\sigma_{ij}}= \limsup_{n \rightarrow \infty} \rho((\sigma_{ij}^n)^*)^{1/n}=\limsup_{n \rightarrow \infty} \rho((\sigma_{ij}^*)^n)^{1/n}=\limsup_{n \rightarrow \infty} (\beta^n)^{1/n}=\beta .$$ \end{proof}

\end{document}